\newtheorem{theorem}{Theorem}[section]
\newtheorem*{theorem*}{Theorem}
\newtheorem{lemma}[theorem]{Lemma}
\newtheorem{corollary}[theorem]{Corollary}
\newtheorem{proposition}[theorem]{Proposition}
\newtheorem{definition}[theorem]{Definition}
\newtheorem{remark}[theorem]{Remark}
\newcommand{\R}{\mathbb{R}}
\newcommand{\C}{\mathbb{C}}
\newenvironment{customthm}[1]
  {\innercustomthm}
  {\endinnercustomthm}
\begin{document}

\title[Bi-Lipschitz equivalent cones with different degrees]
{Bi-Lipschitz equivalent cones with different degrees}

\author[A. Fernandes]{Alexandre Fernandes}
\author[Z. Jelonek]{Zbigniew Jelonek}
\author[J. E. Sampaio]{Jos\'e Edson Sampaio}

\address[Alexandre Fernandes and Jos\'e Edson Sampaio]{    
              Departamento de Matem\'atica, Universidade Federal do Cear\'a,
	      Rua Campus do Pici, s/n, Bloco 914, Pici, 60440-900, 
	      Fortaleza-CE, Brazil. \newline  
              E-mail: {\tt alex@mat.ufc.br}\newline  
              E-mail: {\tt edsonsampaio@mat.ufc.br}
}
\address[Zbigniew Jelonek]{ Instytut Matematyczny, Polska Akademia Nauk, \'Sniadeckich 8, 00-656 Warszawa, Poland \& Departamento de Matem\'atica, Universidade Federal do Cear\'a,
	      Rua Campus do Pici, s/n, Bloco 914, Pici, 60440-900, 
	      Fortaleza-CE, Brazil. \newline  
              E-mail: {\tt najelone@cyf-kr.edu.pl}
}

\keywords{Links of cones, Bi-Lipschitz homeomorphism, Degree, Multiplicity mod 2}
\subjclass[2010]{14B05, 32S50, 58K30, 58K20}
\thanks{The first named author was partially supported by CNPq-Brazil grant 304221/2017-1. The second named author is partially supported by the grant of Narodowe Centrum Nauki number 2019/33/B/ST1/00755. The third named author was partially supported by CNPq-Brazil grant 303375/2025-6 and by the Serrapilheira Institute (grant number Serra -- R-2110-39576).
}
\begin{abstract}
We show that for every $k\ge 3$, there exist complex algebraic cones of dimension $k$ with isolated singularities, which are bi-Lipschitz and semi-algebraically equivalent but have different degrees. We also prove that homeomorphic projective hypersurfaces with dimension greater than 2 have the same degree. In the final part of the paper, we classify links of real cones with base $\mathbb{P}^1_{\mathbb{R}}\times \mathbb{P}^2_{\mathbb{R}}.$ As an application, we give an example of three four-dimensional real algebraic cones in $\mathbb{R}^8$ with isolated singularities, which are semi-algebraically and bi-Lipschitz equivalent but have non-homeomorphic bases. We also discover some new properties to study the links of real algebraic varieties. In particular, we show that for cones over $\mathbb{P}_{\R}^1\times \mathbb{P}_{\R}^2$ and for cones over $\mathbb{P}_{\R}^k$, the multiplicity mod 2 is a metric invariant.
Moreover, we give examples of real algebraic manifolds that are not diffeomorphic to projective manifolds of odd degree.
\end{abstract}

\maketitle
\tableofcontents

\section{Introduction}

In 1971, O. Zariski \cite{Zariski:1971} proposed many questions, and the most well-known among them is the following.
\begin{enumerate}[leftmargin=0pt]
\item[]{\bf Question A.} Let $f,g\colon(\mathbb{C}^n,0)\to (\mathbb{C},0)$ be two complex analytic functions. If there is a homeomorphism $\varphi\colon(\mathbb{C}^n,V(f),0)\to (\mathbb{C}^n,V(g),0)$, is it true that the multiplicities $m(V(f),0)$ and $m(V(g),0)$ are equal?
\end{enumerate}

This is still an open problem. The stated version of Question A is Zariski's famous Multiplicity Conjecture. Recently, Zariski's Multiplicity Conjecture for families with isolated singularities was confirmed by Fern\'andez de Bobadilla and Pe\l ka \cite{BobadillaP:2022}.

Recently, there have also been some contributions to Zariski's Multiplicity Conjecture from the Lipschitz point of view. 
For instance, in \cite{BobadillaFS:2018} the following conjecture was proposed
(see the definition of bi-Lipschitz homeomorphism in Definition \ref{defi:bi_lip}):
\vspace{3mm}

\begin{enumerate}[leftmargin=0pt]
\item[]{\bf General Metric Zariski Multiplicity Conjecture.} \label{conj_local}
{\it Let $X\subset \C^n$ and $Y\subset \C^m$ be two complex analytic sets with $\dim X=\dim Y=d$. If there is a bi-Lipschitz homeomorphism $\varphi\colon(X,0)\to (Y,0)$, then the multiplicities $m(X,0)$ and $m(Y,0)$ are equal.}
\end{enumerate}

This conjecture has a real counterpart:

\begin{enumerate}[leftmargin=0pt]
\item[]{\bf Conjecture A$_{\R}$(Lip).} \label{conj_val}
{\it Let $X\subset \R^n$ and $Y\subset \R^m$ be two real analytic sets with $\dim X=\dim Y=d$. If there is a bi-Lipschitz homeomorphism $\varphi\colon(X,0)\to (Y,0)$, then   $m(X,0)=m(Y,0)\mod 2.$}
\end{enumerate}

This conjecture appears in \cite{Sampaio:2022b}, but it was already proposed as a question in the case of $m=n$ and $d=n-1$ by Valette (see the introduction of \cite{val}).

Already in \cite{BobadillaFS:2018}, the authors proved that the General Metric Zariski Multiplicity Conjecture has a positive answer for $d=2$.
The positive answer for $d=1$ was already known, since Neumann and Pichon  \cite{N-P}, with previous contributions from Pham and Teissier  \cite{P-T} and Fernandes  \cite{F}, proved that the Puiseux pairs of plane curves are invariant under bi-Lipschitz homeomorphisms, and as a consequence, the multiplicity of complex analytic curves of any codimension is invariant under bi-Lipschitz homeomorphisms. In order to find other partial results concerning the General Metric Conjecture see, e.g., \cite{BirbrairFLS:2016}, \cite{Comte:1998}, \cite{ComteMT:2002}, \cite{FernandesS:2016}, \cite{Jelonek:2021}, \cite{Sampaio:2016}, \cite{Sampaio:2019}, \cite{Sampaio:2020b} and \cite{Sampaio:2022}. However, in dimension  three, Birbrair, Fernandes, Sampaio, and Verbitsky   \cite{bfsv} have presented examples of complex algebraic cones over the smooth quadric, which were bi-Lipschitz homeomorphic but had different multiplicities at the origin. To prove their result, Birbrair, Fernandes, Sampaio, and Verbitsky used the theory of Smale-Barden manifolds. See \cite{FernandesS:2023} and \cite{FernandesS:2025} for an overview of the invariance of the multiplicity under bi-Lipschitz homeomorphisms.

On the other hand, the Conjecture A$_{\R}$(Lip) is still almost completely open. We have proved it only for sub-analytic and arc-analytic bi-Lipschitz mapping (see \cite{fjs}). We go back to this problem in the last section.

The first aim of this paper is to generalize the result from \cite{bfsv}. We show that for every $k\ge 3$, there exist complex algebraic cones of dimension $k$ with isolated singularities, which are bi-Lipschitz and semi-algebraically equivalent but have different degrees (see Theorem \ref{thm:complex_cones}). Our proof is completely different from the previous one, and it is based on the Steenrod Theorem on sphere bundles.

Our result is related to the converse of Question B in \cite{Zariski:1971}, which is also related to Question A (see \cite{Sampaio:2020a}). 

Now we state the Zariski Question B \cite{Zariski:1971} (in a somewhat simplified version). 
Let  $E_{0}(X)$ be the projectivized tangent cone to $X$ at $0.$ 

\begin{enumerate}[leftmargin=0pt]
\item[]{\bf Question B.} {\it Let $f,g\colon(\mathbb{C}^n,0)\to (\mathbb{C},0)$ be two complex analytic functions. If there is a homeomorphism $\varphi\colon(\mathbb{C}^n,V(f),0)\to (\mathbb{C}^n,V(g),0)$, is there a homeomorphism $h\colon E_{0}(V(f))\to E_{0}(V(g))$?  }
\end{enumerate}

This problem has a negative answer, as shown by Fern\'andez de Bobadilla \cite{Bobadilla:2005}. However, the Metric Question B is still open:

\begin{enumerate}[leftmargin=0pt]
\item[]{\bf Metric Question B.} {\it Let $f,g\colon(\mathbb{C}^n,0)\to (\mathbb{C},0)$ be two complex analytic functions. If there is a bi-Lipschitz homeomorphism $\varphi\colon(\mathbb{C}^n,V(f),0)\to (\mathbb{C}^n, $ $V(g),0)$, is there a homeomorphism $h\colon E_{0}(V(f))\to E_{0} $ $(V(g))$?  }
\end{enumerate}

In \cite[Conclusion, p. 129]{kol2} Koll\'ar proved that if $X\subset \mathbb{P}_{\C}^{n+1}$ is a smooth projective hypersurface of dimension greater than one, then the degree of $X$ is determined by the underlying topological space of $X$. In \cite[Theorem 1]{BarthelD:1994} Barthel and Dimca proved that in the case of projective hypersurfaces (possibly with singularities) of dimension greater than one, degree one is a topological invariant.
Here, we generalize these results. More precisely, we prove the following:

\begin{customthm}{\ref*{thm:gen_kollar1}}
Let $V\subset \mathbb{P}_{\C}^{m+1}, V'\subset \mathbb{P}_{\C}^{l+1} $ be two projective varieties of dimension $n>2$, which are  set theoretic complete intersections. If they are homeomorphic, then $\deg V=\deg V'.$ 
\end{customthm}

As a consequence of this result, we show in Corollary \ref{cor:question_b_imples_a} that a positive answer to Metric Question B implies a positive answer to the following (see also \cite[Question 3.6.4]{Bobadilla:2022}):
\begin{enumerate}[leftmargin=0pt]
\item[]{\bf Metric Question A.} {\it Let $f,g\colon(\mathbb{C}^n,0)\to (\mathbb{C},0)$ be two complex analytic functions. If there is a bi-Lipschitz homeomorphism $\varphi\colon(\mathbb{C}^n,V(f),0)\to (\mathbb{C}^n,V(g),0)$, is it true that $m(V(f),0)=m(V(g),0)$?}
\end{enumerate}

The second question that we have in mind has the following more general statement:
\begin{enumerate}[leftmargin=0pt]
\item[]{\bf General Metric Question B.} {\it Let $X\subset \C^n$ and $Y\subset \C^m$ be two complex analytic sets with $\dim X=\dim Y=d$. If $(X,0)$ and $(Y,0)$ are bi-Lipschitz homeomorphic, is there a homeomorphism $h\colon E_{0}(X)\to E_{0}(Y)$? }
\end{enumerate}

Note that the version of this question, where it is asked whether bi-Lipschitz homeomorphic complex analytic sets have bi-Lipschitz homeomorphic tangent cones, was positively solved in \cite{Sampaio:2016} (see also \cite{SampaioS:2022}).

In the final part of this paper, we classify links of real cones with base $\mathbb{P}_{\R}^1\times \mathbb{P}_{\R}^2$, see Theorem
\ref{thm:char_P1xP2}. As an application,  we give an example of three four-dimensional real algebraic cones in $\R^8$ with isolated singularity that are semi-algebraically and bi-Lipschitz equivalent but have non-homeomorphic bases, see Theorem \ref{main_thm}.
 Consequently, the real version of the General Metric Question B has a negative answer.

Finally, we give examples of compact manifolds that are not diffeomorphic to real projective manifolds of odd degree, see Theorem \ref{ost} (1) and (3). In particular, we show that for cones over $\mathbb{P}_{\R}^1\times \mathbb{P}_{\R}^2$ and for cones over $\mathbb{P}_{\R}^k$ the Conjecture A$_{\R}$(Lip) has a positive answer, see Corollary \ref{conj}.
 
\vspace{5mm}

{\bf Acknowledgements.} The authors are grateful to professor W\l odzimierz Jelonek from Krak\'ow and to professor Adam Parusi\'nski from Nice for helpful discussions. The authors also would like to thank the anonymous referees for their valuable suggestions and comments that helped to improve the text.

\section{Preliminaries}\label{section:preliminaries}

\begin{definition}\label{defi:bi_lip}
Let $X\subset \R^n$ and $Y\subset \R^m$ be two sets and let $h\colon X\to Y$.
\begin{itemize}
 \item We say that $h$ is {\bf Lipschitz} if there exists a positive constant $C$ such that
$$\|h(x)-h(y)\|\leq C\|x-y\|, \quad \forall x, y\in X.
$$ 
\item We say that $h$ is {\bf bi-Lipschitz}  if $h$ is a homeomorphism, it is Lipschitz and its inverse  is also Lipschitz. In this case, we say that $X$ and $Y$ are {\bf bi-Lipschitz equivalent} or {\bf bi-Lipschitz homeomorphic}. When $n=m$ and $h$ is the restriction of a bi-Lipschitz homeomorphism $H\colon \R^n\to \R^n$, we say that $X$ and $Y$ are {\bf ambient bi-Lipschitz equivalent}.

\end{itemize}
\end{definition}

\begin{definition}\label{cone}
Let $k=\R$ or $\C$. Let $X\subset \mathbb{P}_k^n$ be an algebraic variety. By the affine $k$-cone $C_k(X)$, we mean the homogeneous affine variety in $k^{n+1}$ defined by the same homogeneous equations as $X$. By the algebraic cone $\overline{C_k(X)}\subset \mathbb{P}_k^{n+1}$ with base $X$, we mean the projective closure in $\mathbb{P}^{n+1}_k$ of $C_k(X)$.
Geometrically:
$$\overline{C_k(X)}=\bigcup_{x\in X} \overline{O,x},$$ where $O$ is the center of coordinates in $ k^{n+1}\subset \mathbb {P}^{n+1}_k$, and $\overline{O,x}$ means the projective line which goes through $O$ and $x.$ 
By the link of $C(X)$ we mean the set $L=\{ x\in C(X): ||x||=1\}.$
\end{definition}

\begin{remark}
{\rm Since the cone is homogenous, our definition of the link coincides with the standard one. }  
\end{remark}

\begin{definition}\label{hopf}
Let $k=\R$ or $k=\C$. Let us consider the affine space $\mathbb{A}^n_k\subset \mathbb{P}_k^n$. Let $S$ be the unit sphere in $\mathbb{A}^n_k.$ By the Hopf fibration, we mean the mapping $\pi: S \ni x\mapsto [x]\in \mathbb{P}^{n-1}_k$. Note that if $k=\R$, then the Hopf fibration is a covering of degree two, if $k=\C$ then it is a circle fibration. If $L$ is a link of a cone $C_k(X)$, then by the Hopf fibration $L\to X$, we mean the mapping $\pi_{|L}.$   \end{definition}

\begin{remark}
{\rm It is easy to see that the Hopf fibration is a real analytic mapping.}    
\end{remark}

The following result is well-known, but for the convenience of the reader, we give the proof.

\begin{proposition}\label{alex}
Let $k=\R$ or $\C$. Let $C_k(X)$ and $C_k(Y)$ be affine $k$-cones in $k^N.$ Assume that their links are bi-Lipschitz (semi-algebraically) equivalent. Then they are bi-Lipschitz (semi-algebraically) equivalent. Moreover, if $\dim C_k(X)=\dim C_k(Y)=d$, $2d+2\le N$, and $C_k(X)$ is semi-algebraically bi-Lipschitz equivalent to $C_k(Y)$, then they are ambient semi-algebraically bi-Lipschitz equivalent.
\end{proposition}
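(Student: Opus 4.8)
The plan is to establish the two assertions separately.

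For the first claim I would use that an affine cone is the straight radial cone over its link. Set $L_X=C(X)\cap S^{N-1}$ and $L_Y=C(Y)\cap S^{N-1}$; since $C(X)$ is invariant under positive scaling, every point of $C(X)$ is uniquely of the form $tp$ with $t\ge 0$ and $p\in L_X$. Given a bi-Lipschitz homeomorphism $\phi\colon L_X\to L_Y$, I would define $\Phi\colon C(X)\to C(Y)$ by $\Phi(tp)=t\,\phi(p)$, that is $\Phi(x)=\|x\|\,\phi(x/\|x\|)$ for $x\neq 0$ and $\Phi(0)=0$. To check that $\Phi$ is bi-Lipschitz I would use the elementary identity, valid for unit vectors $p_1,p_2$ and scalars $t_1,t_2\ge 0$,
$$\|t_1p_1-t_2p_2\|^2=(t_1-t_2)^2+t_1t_2\|p_1-p_2\|^2,$$
together with the same identity for $\phi(p_1),\phi(p_2)$. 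Since $\phi$ distorts $\|p_1-p_2\|$ by at most a multiplicative constant $C^{\pm 1}$, the two squared distances are comparable up to the factor $\max\{1,C^2\}$, so $\Phi$ is bi-Lipschitz (the origin is harmless because $\|\Phi(x)\|=\|x\|$). If $\phi$ is moreover semi-algebraic, then so is $\Phi$, since the norm, the radial projection $x\mapsto x/\|x\|$ and scalar multiplication are semi-algebraic. This proves the first two statements at once.

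For the ``moreover'' part it suffices to extend the given semi-algebraic bi-Lipschitz homeomorphism $h\colon C(X)\to C(Y)$ to an ambient semi-algebraic bi-Lipschitz homeomorphism of $\R^N$. I would first produce such an extension in the doubled space $\R^N\times\R^N$. Extending $h$ and $h^{-1}$ to semi-algebraic Lipschitz maps $F,G\colon\R^N\to\R^N$ (by a semi-algebraic McShane--Kirszbraun type extension), I would consider the shears $S_1(x,y)=(x,y+F(x))$ and $S_2(x,y)=(x-G(y),y)$ and the coordinate flip $R(x,y)=(y,x)$; each is semi-algebraic and bi-Lipschitz (the inverses of $S_1,S_2$ are again shears, by $-F$ and $-G$). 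A direct computation shows that $R\circ S_2\circ S_1$ is a semi-algebraic bi-Lipschitz homeomorphism of $\R^{2N}$ carrying $C(X)\times\{0\}$ to $C(Y)\times\{0\}$ and restricting to $h$ on $C(X)$.

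The remaining step --- descending this ambient equivalence from $\R^{2N}$ to $\R^N$ --- is where the hypothesis $2d+2\le N$ enters, and it is the step I expect to be the main obstacle. Since $\dim C(X)=\dim C(Y)=d$, the codimension satisfies $N-d\ge d+2\ge 3$; equivalently $2d+1<N$, which is exactly the threshold at which a $(d+1)$-dimensional isotopy sweep can be kept disjoint from a $d$-dimensional obstruction. I would therefore realize the descent by a semi-algebraic general-position/isotopy argument, using this codimension bound to unknot $C(Y)$ inside $\R^N$ and to avoid self-intersections, thereby transplanting the $\R^{2N}$-equivalence into an ambient semi-algebraic bi-Lipschitz homeomorphism of $\R^N$. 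The first claim and the $\R^{2N}$-extension are routine; essentially all of the genuine difficulty is concentrated in this last codimension-controlled descent.
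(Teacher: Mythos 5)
Your proof of the first assertion is correct and is essentially the paper's own proof: both construct the homogeneous radial extension $\Phi(tp)=t\,\phi(p)$ of the link equivalence $\phi$ and check that it is bi-Lipschitz and semialgebraic. Your verification via the exact identity $\|t_1p_1-t_2p_2\|^2=(t_1-t_2)^2+t_1t_2\|p_1-p_2\|^2$ for unit vectors is in fact a cleaner computation than the paper's triangle-inequality estimate, but the approach is the same.

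The ``moreover'' assertion, however, is not established by your argument. The paper does not prove it either; it quotes it (``the last statement follows directly from \cite{bfj}''), and the main theorem of that reference is precisely the required result: a semialgebraic bi-Lipschitz homeomorphism between $d$-dimensional semialgebraic subsets of $\R^N$ with $2d+2\le N$ is the restriction of a semialgebraic bi-Lipschitz homeomorphism of $\R^N$. Your intermediate step --- McShane-type semialgebraic Lipschitz extensions $F,G$ and the shears $S_1(x,y)=(x,y+F(x))$, $S_2(x,y)=(x-G(y),y)$ in $\R^{2N}$ --- is correct and standard, but note that it uses nothing about $d$ and $N$: doubling the ambient dimension makes the extension problem solvable unconditionally. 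Consequently the entire content of the hypothesis $2d+2\le N$, i.e.\ the entire content of the statement, is concentrated in your final ``descent'' from $\R^{2N}$ to $\R^N$, which you do not prove but only describe as a ``semi-algebraic general-position/isotopy argument''. This is a genuine gap, and it cannot be closed with the tools you invoke: codimension-$\ge 3$ general position and unknotting theorems are topological/PL statements; they yield homeomorphisms and isotopies with no control on Lipschitz constants and no semialgebraicity, and upgrading such arguments to the semialgebraic bi-Lipschitz category is exactly the nontrivial content of \cite{bfj}. As written, your text proves the first claim but leaves the ``moreover'' claim open; to complete it you should either cite \cite{bfj}, as the paper does, or reproduce its extension theorem, which is a substantial piece of work rather than a routine finishing step.
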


\begin{proof}
Let $X',Y'$ denote the links of $C(X),C(Y)$ respectively. We have  $X',Y'\subset \mathbb{S}^{N-1}$, where $\mathbb{S}^{N-1}$ denotes the unit Euclidean sphere in $\mathbb{R}^{N}$. In this case, $$C(X) =\{ t\cdot x \ : \ x\in X' \ \mbox{and} \ t\geq 0\} \ \mbox{and} \  C(Y) =\{ t\cdot y \ : \ y\in Y' \ \mbox{and} \ t\geq 0\}.$$ By assumption, there is a bi-Lipschitz homeomorphism $f\colon X'\rightarrow Y'$, i.e. there is $\lambda \geq 1$ such that  $$\frac{1}{\lambda} \| x_1-x_2 \| \leq \| f(x_1)-f(x_2) \| \leq \lambda \| x_1 - x_2 \| \ \forall \ x_1,x_2\in X'.$$ Let us define $F\colon C(X)\rightarrow C(Y)$ by $F(t\cdot x) = t\cdot f(x)$ for all $ x\in X'$ and $t\geq 0$. We claim that $F$ is a bi-Lipschitz map. In fact, given $t\cdot x_1, s\cdot x_2\in C(X)$ (one may suppose that $t\le s$), we have
\begin{eqnarray*}
	\| F(t\cdot x_1) - F(s\cdot x_2) \| &\leq & \| F(t\cdot x_1) - F(t\cdot x_2)\| + \| F(t\cdot x_2) - F(s\cdot x_2)\| \\
	 &=& t \| f(x_1) - f(x_2) \| + |t-s|\| f(x_2) \| \\
	 &\leq& \lambda t \| x_1 - x_2 \| + |t-s| \|x_2\| \\
	 &=& \lambda \| t\cdot x_1 - t\cdot x_2 \| + \| t\cdot x_2 -s\cdot x_2\|
     \end{eqnarray*}

     Note that  $\|x_1\|=\|x_2\|= 1$ and $0\le t\le s$. Hence $ \|t\cdot x_1 - s\cdot x_2\|^2= \|t\cdot x_1-t\cdot x_2-(s-t)\cdot x_2\|^2=\|t\cdot x_1 - t\cdot x_2||^2+2(s-t)t(1-x_1\circ x_2)+|s-t|^2\geq\|t\cdot x_1 - t\cdot x_2\|^2$ and finally 
     $||t\cdot x_1 - s\cdot x_2||\geq ||t\cdot x_1 - t\cdot x_2||$ (the symbol $\circ$ means here the scalar product). In a similar way $\| t\cdot x_1 -s\cdot x_2\|\geq\| t\cdot x_2 -s\cdot x_2\|$ hence:

     \begin{eqnarray*}
     \| F(t\cdot x_1) - F(s\cdot x_2) \| 
	 &\leq & \lambda \| t\cdot x_1 - s\cdot x_2 \| + \| t\cdot x_1 -s\cdot x_2\|\\
  &=& (\lambda + 1) \| t\cdot x_1 - s\cdot x_2 \|.
\end{eqnarray*}

By the symmetry  the inverse map of $F$ is also a  Lipschitz map. This means that $F$ is a bi-Lipschitz map.
Moreover, if $f$ is additionally semi-algebraic, we see that $F$ is also semi-algebraic by construction, and  similarly its inverse.
The last statement follows directly from \cite{bfj}.
\end{proof}

\begin{remark}
 {\rm Two semi-algebraic cones are semi-algebraically bi-Lipschitz equivalent   if and only if they have semi-algebraically bi-Lipschitz equivalent links (see \cite{val?}).}
 \end{remark}

\begin{definition}
An $k$-dimensional  subanalytic set $C\subset\R^n$ is said to be an {\bf Euler cycle} if it is a closed set and if, for some  triangulation of $C$ (and hence for any that refines it), the number of $k$-dimensional simplexes containing a given $(k-1)$-dimensional simplex  is even.
\end{definition}

In particular, if $C$ is compact  and it is an Euler cycle, then it is $\Bbb Z/2$   homological cycle. Indeed, if $T$ is a suitable triangulation, then $C$  can be treated as a sum of $k$-dimensional simplexes from $T.$

\begin{definition}
Let $a:\R^n\to\R^n$ be the antipodal mapping, i.e., $a(x)=-x.$ We say that a set $C \subset \mathbb{R}^n$ is {\bf $a$-invariant} if it is preserved by the antipodal mapping.
\end{definition}

\begin{remark}
{\rm Let $C$ be a $a$-invariant Euler cycle, with $a$-invariant triangulation $T$. Let $\overline{C}$ be the projective closure   of  $C.$ Then there exists an $a$-invariant triangulation of $\overline{C}$ which on $C$ is a  refinement of  the triangulation
$T$, such that $\overline{C}$ with this triangulation is a $\mathbb{Z}/(2)$ homological cycle.

\vspace{5mm}

Indeed, take a sufficiently large sphere $S$ with the center at $0$, which is transversal to $C.$ Let $T'$ be an $a$-invariant  refinement of the triangulation $T$, which is compatible with $C\cap S.$ Then $C$ with this triangulation is also an Euler cycle. Take a projective closure 
$\overline{C}$ of this  cycle, and triangulation of $\overline{C}$ which is compatible with $T'$. Let $S$ be a $n-1$ dimensional stratum at infinity.
If $A\subset \overline{C}$ is a $n$ dimensional simplex that contains $S$, then $a(A)$ is also such a simplex. But by the construction, we have $a(A)\not=A.$}
\end{remark}

The following theorem has been proved in \cite[Theorem 3.2]{fjs}: 

\begin{theorem}\label{Ztwo_cycle}
Let $X\subset \R^n,Y\subset \R^m$ be real algebraic sets, and let $h\colon X\to Y$ be a subanalytic and bi-Lipschitz homeomorphism. Assume that the projective closure of the graph of $h$ is a $\mathbb{Z}/(2)$ homological cycle. Then
${\rm deg} (X)= {\rm deg } (Y) \ mod \ 2.$
\end{theorem}

\begin{corollary}\label{bilipszic}
 Let $X\subset \R^n,Y\subset \R^m$ be real algebraic sets, and let $h\colon X\to Y$ be a subanalytic and bi-Lipschitz homeomorphism. Assume that the graph of $h$ is an $a$-invariant Euler   cycle in $\R^n\times \R^m.$ Then
${\rm deg} (X)= {\rm deg } (Y) \ mod \ 2.$   
\end{corollary}

\section{Complex cones}
In this Section, we generalise the main result in \cite{bfsv}.

Let $C_k$ denote the Veronese embedding of degree $k$ of $\mathbb{P}_{\C}^1$ into $\mathbb{P}_{\C}^k$ given by   $\psi([z_0,z_1])=[z_0^k:z_0^{k-1}z_1:....,z_0z_1^{k-1}:z_1^k]$ (see \cite[Example 8.4.3 b)]{Fulton} or \cite[Example 18.13]{Harris:1992}). Let $n\ge 2$ and consider the varieties $X_{k,n}=\phi( C_k\times \mathbb{P}_{\C}^{n-1})$, where $\phi:\mathbb{P}_{\C}^k\times \mathbb{P}_{\C}^1\rightarrow \mathbb{P}_{\C}^{n_k},  \phi([z_0:z_1:...:z_k],[w_0,w_1])=[z_0w_0:z_0w_1:z_1w_0:z_1w_1,...:z_kw_0:z_kw_1]$ is the Segre embedding (see \cite[Example 8.4.3 a)]{Fulton} or \cite[Example 18.15]{Harris:1992}).

\begin{theorem}\label{thm:complex_cones}
For each fixed $n$, all varieties $X_{k,n}$ have different degrees, and among the cones $C_{\C}(X_{k,n})$, there are infinitely many cones that are bi-Lipschitz and semi-algebraically equivalent.
\end{theorem}

\begin{proof}

Note that $C_k$ is $k\mathbb{P}_{\C}^1$ as a cycle. Hence $C_k\times \mathbb{P}_{\C}^{n-1}\sim k \mathbb{P}_{\C}^1\times \mathbb{P}_{\C}^{n-1}$, where $\Gamma \sim \Lambda$ means that $\Gamma$ and $ \Lambda$ are homologous cycles. Since after the Segre embedding $\deg\mathbb{P}_{\C}^1\times \mathbb{P}_{\C}^{n-1}=n$, we have deg $X_{k,n}=kn$  (see again \cite[Example 8.4.3]{Fulton} or \cite[Examples 18.13 and 18.15]{Harris:1992}).

By construction, $X_{k,n}$ is the  union of projective $(n-1)$-planes 
$X_{k,n}=\bigcup_{a\in C_k} \phi(\{a\} \times \mathbb{P}_{\C}^{n-1})$. This means that $\overline{C_{\C}(X_{k,n})}$ is the  union of $n$-planes, which have the $(n-1)$-plane
$\phi(\{a\}\times\mathbb{P}_{\C}^{n-1})$ at infinity and go through the point $O=(0,...,0)$ (see Definition \ref{cone}). Thus the link $L_{k,n}$ of this cone is a union of $(2n-1)$-spheres. In fact, using the Ehresmann Theorem, it is easy to observe that  these links are fibre  bundles over $C_k\cong \mathbb{S}^2$ with the projection being the composition of the projection $p: \mathbb{P}_{\C}^{n_k+1} \setminus O\to \mathbb{P}_{\C}^{n_k}$ restricted to $L_{k,n}$ and the projection $q:  C_k\times\mathbb{P}_{\C}^{n-1}\to C_k.$ Since all fibers are great spheres on a fixed unit sphere, by the Steenrod Theorem  \cite[Th.2, p. 298]{steenrod}, we see that the structure group of this fibre bundle can be reduced to the full orthogonal group, hence this fibration is a sphere bundle in the sense of Steenrod.  By the Steenrod Theorem \cite[6.III, p. 300]{steenrod}) topologically there are only two such sphere bundles. On the other hand, it follows from \cite[Classification Theorem, p. 155]{KirbyS:1977} that on a compact manifold of dimension different from four  there is only a finite number of differential structures.  This means that all manifolds $L_{k,n}$ , $k=1,2,...,$ can have only a finite number of different differential structures. By the Dirichlet box principle, among all $X_{k,n}$, there is an infinite family $\mathcal{S}$  whose  members are diffeomorphic to each  other.

By \cite[Corollary 11]{kol}, all links from the family $\mathcal{S}$ are Nash diffeomorphic. In particular, they are bi-Lipschitz and semi-algebraically equivalent. By Proposition \ref{alex}, we see that all cones $C_{\C}(X), X\in \mathcal{S}$, are bi-Lipschitz and semi-algebraically equivalent. But all members of the family $\{C_{\C}(X): X\in \mathcal{S}\}$ have different degrees.
\end{proof}

\begin{corollary}
For every $n\ge 3$ there exist infinitely many analytic $n$ dimensional germs $V_i\subset (\C^{2n},0), i=1,2,....,$ with isolated singularities, which are bi-Lipschitz, sub-analytically equivalent, but have pairwise different multiplicities at $0.$
\end{corollary}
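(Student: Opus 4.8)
The corollary follows from Theorem~\ref{thm:complex_cones} by a direct transfer of the cone data to germs of analytic sets. The plan is to take the infinite family $\mathcal{S}$ produced in the theorem and extract from it two cones $C(X)$ and $C(X')$ with $X,X'\in\mathcal{S}$ that are bi-Lipschitz and semi-algebraically equivalent but have different degrees; such a pair exists because $\mathcal{S}$ is infinite while the degrees $\deg X_{k,n}=kn$ are pairwise distinct. Setting $n:=n$ (so that $\dim X_{k,n}=n$ and the cones live in $\C^{2n+1}$, hence in $\C^{2n}$ after a generic linear projection, which is the point requiring a small check), I would take $V=C(X)$ and $V'=C(X')$ as germs at the origin.

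The first step is to realize these affine cones as germs of analytic sets in $(\C^{2n},0)$. Since each $X\in\mathcal{S}$ is a smooth projective variety of dimension $n-1$, its affine cone $C(X)$ is an algebraic (hence analytic) set of dimension $n$ in $\C^{2n+1}$ whose only singular point is the vertex $0$, the smoothness of the base ensuring that the cone is smooth away from $O$. Using a generic linear projection $\C^{2n+1}\to\C^{2n}$, which is injective on $C(X)$ near the origin and preserves both the isolated-singularity property and the degree (equivalently the multiplicity at $0$, since for a cone the multiplicity at the vertex equals the degree of the base), I obtain germs $V,V'\subset(\C^{2n},0)$ with isolated singularities. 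The key numerical point is that for an affine cone the multiplicity $m(C(X),0)$ coincides with $\deg X$, so the distinct degrees of $X$ and $X'$ yield distinct multiplicities $m(V,0)\neq m(V',0)$.

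The second step is to observe that the bi-Lipschitz and semi-algebraic equivalence of the cones $C(X)$ and $C(X')$, established in Theorem~\ref{thm:complex_cones}, descends to a bi-Lipschitz and sub-analytic (indeed semi-algebraic) equivalence of the germs $V$ and $V'$ at the origin. Because the equivalence $F$ constructed in Proposition~\ref{alex} is the cone over a bi-Lipschitz homeomorphism of links and therefore fixes the vertex and is homogeneous of degree one, it restricts to a germ-level bi-Lipschitz homeomorphism $(C(X),0)\to(C(X'),0)$; transporting through the generic projections (which are semi-algebraic bi-Lipschitz embeddings onto their images near $0$) produces the desired germ equivalence of $V$ and $V'$.

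The main obstacle I expect is the bookkeeping around the generic projection: one must verify that a generic linear projection to $\C^{2n}$ remains injective on a neighborhood of $0$ in the cone, keeps the singularity isolated, and neither raises nor lowers the multiplicity at the vertex, while at the same time not destroying the semi-algebraic bi-Lipschitz equivalence between the two images. Since $\dim C(X)=n$ and $2n+2\le 2(2n)=4n$ for $n\ge 1$, the codimension condition $2d+2\le N$ of Proposition~\ref{alex} is comfortably satisfied in the ambient $\C^{2n}$, so one may in fact arrange the equivalence to be ambient; the remaining verification that the projection preserves degree and isolatedness is standard general-position argument for generic projections of cones, and this is the step where the hypothesis $n\ge 3$ (ensuring $\dim X=n-1\ge 2$ and hence that the cone construction in the theorem applies with an infinite diffeomorphic family) is genuinely used.
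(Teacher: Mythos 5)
Your core idea is the intended one: apply Theorem~\ref{thm:complex_cones}, use that the multiplicity of a cone at its vertex equals the degree of its base, and pick from the infinite family $\mathcal{S}$ two bi-Lipschitz, semi-algebraically equivalent cones with different degrees (semi-algebraic equivalence being in particular sub-analytic). However, your dimension bookkeeping contains a genuine error, and the generic-projection step you build on top of it does not work. With the theorem's own parameter $n$, the variety $X_{k,n}=\phi(C_k\times\C\Bbb P^{n-1})$ has dimension $n$ (not $n-1$), it sits in $\C\Bbb P^{2n+1}$, and its affine cone sits in $\C^{2n+2}$ and has dimension $n+1$. So taking $V=C(X)$ with $X\in\mathcal{S}$ ``as is'' produces germs of the wrong dimension, and no linear projection can repair that, since projections do not change dimension. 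Your text oscillates between $\dim X=n$ and $\dim X=n-1$, which is the symptom of the problem.

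The projection step is also flawed on its own terms. For a cone, injectivity of a linear map near the vertex is equivalent to global injectivity (by homogeneity), and a generic linear projection of the cone down to $\C^{2n}$ need not be injective at all: the secant variety of the cone is the cone over the secant variety of the base, and for an $n$-dimensional base in general position in $\C\Bbb P^{2n+1}$ the secant variety already fills the ambient space, so every such projection identifies points. The correct and much simpler route, which is what the corollary's statement is calibrated for, is to apply Theorem~\ref{thm:complex_cones} with the parameter $n-1$ in place of $n$: the varieties $X_{k,n-1}$ have dimension $n-1$, live (after the generic projection already performed inside the theorem) in $\C\Bbb P^{2(n-1)+1}=\C\Bbb P^{2n-1}$, and their affine cones are $n$-dimensional algebraic subsets of $\C^{2n}$, smooth away from the origin, with $m\bigl(C(X_{k,n-1}),0\bigr)=\deg X_{k,n-1}=k(n-1)$. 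The hypothesis $n\ge 3$ is exactly the condition $n-1\ge 2$ required by the theorem. No further projection is needed, and the bi-Lipschitz and semi-algebraic equivalence of the cones from the theorem is already an equivalence of the germs at $0$ (the homeomorphism of Proposition~\ref{alex} fixes the vertex).
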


\begin{proof}
Using a generic projection, we can assume that all $X_{k,n}$ are in $\mathbb{\C}^{2n},$
see  Theorem 5.5 in \cite{bfj} (note that $X_{k,n}$ is a cone, hence we can use a local method).
\end{proof}

\section{Topological invariance of the degree of a projective set-theoretic complete intersection}  

The following result is a rather simple consequence of the Lefschetz theorem. However, despite this, it was not known before, see e.g. the book \cite{dimca}, where it is stated only in the  linear and  smooth case (see Corollary 2.12 and Exercise 3.35 in \cite{dimca}). Since it is fundamental for us, we have decided to give a full detailed proof here.

\begin{theorem}\label{thm:gen_kollar1}
Let $V\subset \mathbb{P}_{\C}^{m+1}, V'\subset \mathbb{P}_{\C}^{l+1} $ be two projective varieties of dimension $n>2$, which are  set theoretic complete intersections. If they are homeomorphic, then $\deg V=\deg V'.$ 
\end{theorem}

\begin{proof}
Our proof was inspired by that in \cite{BarthelD:1994}.
Let $V_1,\ldots,V_r$ (resp. $V_1',\ldots,V_s'$) be the irreducible components of $V$ (resp.  $V'$).
Let $\phi\colon V\to V'$ be a homeomorphism. By \cite[Lemma A.8]{Gau-Lipman:1983}, $\phi(V_j)$ is an irreducible component of $V'$ for all $j=1,...,r$. Then $r=s$, and by reordering the indices if necessary, we can assume that $\phi(V_j)=V'_j$ for all $j=1,...,r$.
Note that  $\deg V =\deg V_1 +...+\deg V_r$ and $\deg V' =\deg V'_1+...+\deg V'_r.$

Let us recall that the cohomology ring of   $\mathbb{P}_{\C}^{m+1}$ is isomorphic to $\mathbb{Z}[x]/(x^{m+2})$ (see e.g. \cite{greenberg}) and it is generated by the cohomology class  $\alpha\in H^2(\mathbb{P}_{\C}^{m+1},\mathbb{Z})$ dual to the class of hyperplane.
Let $\iota: V\to \mathbb{P}_{\C}^{m+1}$ be the inclusion. By Lefschetz theorem (see \cite{dimca}, p. 143) and our assumption $n>2$, the mapping $\iota^*:  H^2(\mathbb{P}_{\C}^{m+1},\mathbb{Z})\to H^2(V,\mathbb{Z})$ is an isomorphism. In particular, the element
$\alpha_V=\iota^*(\alpha)$ is a generator of  $H^2(V,\mathbb{Z}).$   

By the Mayer-Vietoris exact sequence, we have $$H_{2n}(V,\mathbb{Z})=\bigoplus_{i+1}^r H_{2n}(V_i, \mathbb{Z})=\mathbb{Z}^r$$ (it is generated by the fundamental classes $[V_i]$). Moreover, by the relative long exact cohomology sequence,
$H^{2n} (V,\Bbb Z) = H^{2n} (V,Sing(V), \Bbb Z).$
By Lefschetz duality, we have $$H^{2n} (V,Sing(V), \Bbb Z)=H_0(V \setminus Sing(V), \Bbb Z).$$
We have $H_0(V \setminus Sing(V), \Bbb Z)=\Bbb Z^r.$ Consequently,  $H^{2n} (V,\Bbb Z) = \Bbb Z^r$ (see  \cite{max}, 4.1).

Since we have a canonical epimorphism $H^{2n}(V,\mathbb{Z}) \to H_{2n}(V,\mathbb{Z})^*$, we see that these spaces are isomorphic (see \cite[23.7]{greenberg}).
In fact, the restriction mapping $H^{2n}(\mathbb{P}_{\C}^{m+1},\mathbb{Z})\to H^{2n}(V, \mathbb{Z})$ is dual to the inclusion mapping  $\iota: H_{2n}(V, \mathbb{Z})\to H_{2n}(\mathbb{P}_{\C}^{m+1},\mathbb{Z})$ (see \cite[23.11]{greenberg}). The mapping $\iota$ sends a cycle $\sum^r_{i=1}\beta_i[V_i]\in H_{2n}(V)$ to the cycle  $\sum ^r_{i=1} \beta_i[V_i]\in H_{2n}(\mathbb{P}_{\C}^{m+1},\mathbb{Z})$.

This means that $\iota^*(\alpha^n)(\sum_{i=1}^r \beta_i[V_i])=    \sum_{i=1}^r \beta_i\alpha^n([V_i]).$ Since $V_i$ as a cycle is equal to $(\deg V_i)\tilde{\alpha}^{m+1-n}$, where $\tilde{\alpha}$ is the homology class of a hyperplane, we have $\iota^*(\alpha^n)(\sum_{i=1}^r \beta_i[V_i])=\sum^r_{i=1} \beta_i \deg V_i.$
In other words
$$\iota^*(\alpha^n)=\alpha_V^n=\sum^r_{i=1} \deg V_i[V_i]^*,$$
\noindent where $[V_i]^*$ is the (dual) fundamental class, i.e., $[V_i]^*([V_j])=\delta_{ij}$.

Now let $\alpha_{V'}$ be a generator of $H^2(V',\mathbb{Z})$ constructed in an analogous way to $\alpha_V$. Hence, by symmetry we have $\alpha_{V'}^n=\sum^r_{i=1} \deg V'_i[V'_i]^*$. Let $\phi\colon V\to V'$ be a homeomorphism. Note that $\phi^*([V'_i]^*)([V_j])=[V'_i]^*(\phi_*([V_j])=[V'_i]^*(\pm [V'_j])=\pm \delta_{ij}.$
Thus $\phi^*([V'_i]^*)=\pm [V_i]^*$. Moreover,  $\phi^*(\alpha_{V'})=\pm \alpha_V.$ 
Consequently
$$
 \sum^r_{i=1}\deg V_i' \cdot (\pm [V_i]^*)=\phi^*(\deg V_i' \cdot [V_i']^*)=\phi^*(\alpha_{V'}^n)=\pm \alpha_V^n = \pm \sum^r_{i=1}\deg V_i \cdot [V_i]^*.
$$ 
Hence $\deg V_i=\deg V_i'$ for $i=1,...,r.$
\end{proof}

\begin{corollary}\label{cor:gen_kollar}
Let $V,V'\subset \mathbb{P}_{\C}^{n+1}$ be two projective hypersurfaces. Assume $n>2.$ If $V$ is homeomorphic to $V'$, then $\deg V=\deg V'.$
\end{corollary}

\begin{remark}
    {\rm If varieties $V$ and $V'$ are additionally irreducible, then the proof of Theorem \ref{thm:gen_kollar1} simplifies and immediately shows that, in this case, the degree is  also a homotopy invariant.}
\end{remark}

%\begin{remark}
%{\rm If $V$ is additionally irreducible, we can say more (the proof is essentially the same):

%\begin{corollary}
% If $V\subset \mathbb{P}_{\C}^{m+1}, V'\subset \mathbb{P}_{\C}^{l+1}$ are two projective irreducible varieties of dimension %$n>2$, which are set theoretic complete intersections. If they are homotopy equivalent,  then $\deg V=\deg V'.$   
%\end{corollary}

%The authors do not know whether the last statement holds for varieties that are not irreducible.}
%\end{remark}

\begin{remark}
{\rm (See \cite[Chapter 5, Example 3.34]{dimca}) Theorem \ref{thm:gen_kollar1} is not true if $n\le 2.$ Indeed, it is easy for $n=1.$ For $n=2$, it follows from 
\cite{E} and \cite{L-W}. In fact, the complete intersections  $X=V(10,7,7,6,3,3)$ in $\mathbb{P}_{\C}^8$ and  $Y=V(9,5,3,3,3,3,3,2,2)$
in $\mathbb{P}_{\C}^{11}$ are homeomorphic, but they have different degrees (we  use here the notation from \cite{dimca}). }
\end{remark}

\begin{corollary}\label{wniosek}
 Let $f,g\colon(\mathbb{C}^n,0)\to (\mathbb{C},0)$ be two complex analytic functions with $n>4$. Assume that there is a bi-Lipschitz homeomorphism $\varphi\colon(\mathbb{C}^n,V(f),0)\to (\mathbb{C}^n,V(g),0)$. If there is a homeomorphism $h\colon E_{0}(V(f))\to E_{0}(V(g))$, then $m(V(f),0)=m(V(g),0)$.
\end{corollary}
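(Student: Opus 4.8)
The plan is to read off the degrees of the two germs from their projectivized tangent cones, using Theorem \ref{thm:gen_kollar} together with the homeomorphism $h$, and then to recover the genuine multiplicities from the bi-Lipschitz hypothesis. First I would record that $E_0(V(f))=\{0\}\times\mathbb{P}C(V(f),0)$ and $E_0(V(g))=\{0\}\times\mathbb{P}C(V(g),0)$ are complex projective hypersurfaces lying in $\{0\}\times\C\Bbb P^{n-1}$, of complex dimension $n-2$. Writing $\C\Bbb P^{n-1}=\C\Bbb P^{(n-2)+1}$, the assumption $n>4$ is exactly the inequality $n-2>2$ required by Theorem \ref{thm:gen_kollar}. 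Hence the homeomorphism $h$ forces $\deg E_0(V(f))=\deg E_0(V(g))$; moreover, by \cite[Lemma A.8]{Gau-Lipman:1983} (already used in the proof of Theorem \ref{thm:gen_kollar}) the map $h$ carries irreducible components to irreducible components, so this equality of degrees may be taken componentwise for a suitable matching of the components of the two projectivized tangent cones.

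Next I would make precise the gap between this degree and the multiplicity. If $f=f_m+f_{m+1}+\cdots$ is the homogeneous expansion with $f_m\not\equiv 0$, then $m(V(f),0)=m=\deg f_m$, while $E_0(V(f))$ is the reduced hypersurface $\{f_m=0\}\subset\C\Bbb P^{n-1}$. Factoring $f_m=\prod_i h_i^{k_i}$ into distinct irreducible forms, one has $\deg E_0(V(f))=\sum_i\deg h_i$, whereas $m(V(f),0)=\sum_i k_i\deg h_i$, the exponents $k_i$ being the relative multiplicities of the components of the tangent cone. Thus Step~1 already settles the case in which the tangent cones are reduced, and in general it controls only the ``square-free part'' of the multiplicity; what remains is to pin down the relative multiplicities $k_i$.

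This is where the bi-Lipschitz homeomorphism $\varphi$ enters. By Sampaio's tangent cone theorem \cite{Sampaio:2016}, $\varphi$ induces a bi-Lipschitz homeomorphism between the tangent cones $C(V(f),0)$ and $C(V(g),0)$, and I would then appeal to the bi-Lipschitz invariance of the relative multiplicities of the tangent cone (in the spirit of \cite{FernandesS:2016}) to match the exponents $k_i$ with the corresponding exponents of $V(g)$. Combining this with Step~1, the multiplicity $m(V(f),0)=\sum_i k_i\deg h_i$ is expressed through two invariants, and the desired equality $m(V(f),0)=m(V(g),0)$ should follow.

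The hard part, and the step I would scrutinize most, is the compatibility of the two matchings. The degrees $\deg h_i$ are supplied only by the topological map $h$ — indeed a purely bi-Lipschitz equivalence cannot detect degrees at all, as Theorem \ref{thm:complex_cones} exhibits cones of different degrees that are bi-Lipschitz equivalent — whereas the relative multiplicities $k_i$ are supplied only by the metric map $\varphi$. A priori these two maps induce different bijections between the components, and since $\sum_i k_i\deg h_i$ depends on how the $k_i$ are paired with the $\deg h_i$, one cannot simply combine two unrelated multisets. The crux is therefore to produce a single correspondence of components along which both the degree and the relative multiplicity are preserved — equivalently, to show that the relative-multiplicity data furnished by $\varphi$ is compatible with the component matching coming from $h$. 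I expect this reconciliation, rather than either invariance statement taken in isolation, to carry the real content of the argument.
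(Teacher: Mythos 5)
Your overall strategy---use $h$ together with Theorem \ref{thm:gen_kollar} to control degrees, and use the bi-Lipschitz map $\varphi$ to control what the degree of the reduced projectivized tangent cone misses---is the same division of labor as in the paper, and your dimension count ($n-2>2$ is exactly $n>4$) is correct. But your write-up stops at an acknowledged gap, and that gap is precisely the step that needs a proof: you reduce the corollary to the claim that the component correspondence induced by the topological map $h$ and the one induced by the metric map $\varphi$ can be reconciled into a single matching that preserves both the degrees $\deg h_i$ and the relative multiplicities $k_i$, and you give no argument for this. As you yourself note, $\sum_i k_i\deg h_i$ is not determined by the two multisets $\{k_i\}$ and $\{\deg h_i\}$ taken separately, so without the reconciliation the argument does not close. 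Moreover, your intermediate step---that $\varphi$ preserves the relative multiplicities ``in the spirit of \cite{FernandesS:2016}''---is itself only a gesture, not an appeal to a precise statement; it is essentially as deep as what you are trying to prove.

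The paper's proof is arranged so that the matching problem never arises. It invokes \cite[Theorem 2.1]{FernandesS:2016} at the very start to justify the assumption that $f$ and $g$ are \emph{irreducible homogeneous} polynomials: the cited reduction theorem is exactly the device that absorbs the passage to initial forms together with all of the component and relative-multiplicity bookkeeping you were attempting by hand. Once $f$ and $g$ are irreducible homogeneous, the variety $V(f)$ is its own (reduced, irreducible) tangent cone, so $m(V(f),0)=\deg E_0(V(f))$ and $m(V(g),0)=\deg E_0(V(g))$, there is a single component on each side, and Theorem \ref{thm:gen_kollar} applied to $h$ finishes the proof in one line. Note also that the hypothesis on $h$ survives this reduction for free, since the projectivized tangent cone of a cone is its own projectivization, i.e.\ $E_0$ is unchanged when $f$ is replaced by its initial form. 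So the missing idea in your proposal is not a patch to be added at the end: it is to perform the reduction to the irreducible homogeneous case \emph{first}, by citing the theorem that makes this legitimate, rather than extracting two separate invariants from $h$ and $\varphi$ and trying to recombine them afterwards.
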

\begin{proof}
By \cite[Theorem 2.1]{FernandesS:2016}, we may assume that $f$ and $g$ are irreducible homogeneous polynomials. In this case, $m(V(f),0)=\deg E_{0}(V(f))$ and $m(V(g),0)=\deg E_{0}(V(g))$. Since there is a homeomorphism $h\colon E_{0}(V(f))\to E_{0}(V(g))$, it follows from Corollary \ref{cor:gen_kollar} that $m(V(f),0)=m(V(g),0)$.
\end{proof}

Thus, we obtain the following:

\begin{corollary}\label{cor:question_b_imples_a}
 If Metric Question B has a positive answer, then Metric Question A has a positive answer as well.
\end{corollary}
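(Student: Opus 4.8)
The plan is to deduce Question A$_{Lip}$ from the preceding Corollary, using the assumed positive answer to Question B$_{Lip}$ to supply the one hypothesis that is otherwise missing. So suppose Question B$_{Lip}$ has a positive answer, and let $f,g\colon(\C^n,0)\to(\C,0)$ together with a bi-Lipschitz homeomorphism $\varphi\colon(\C^n,V(f),0)\to(\C^n,V(g),0)$ be given as in Question A$_{Lip}$. Since $\varphi$ is precisely the kind of map appearing in the hypothesis of Question B$_{Lip}$, the assumed positive answer to the latter produces a homeomorphism $h\colon E_0(V(f))\to E_0(V(g))$ of the projectivized tangent cones.

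With $h$ in hand, I would run the argument of the preceding Corollary. First, by \cite[Theorem 2.1]{FernandesS:2016} one reduces to the situation where $f$ and $g$ are homogeneous; then the tangent cone equals the hypersurface itself and $m(V(f),0)=\deg E_0(V(f))$, $m(V(g),0)=\deg E_0(V(g))$, where $E_0(V(f)),E_0(V(g))\subset\C\Bbb P^{n-1}$ are projective hypersurfaces of dimension $n-2$. When $n>4$ this dimension is at least three, so Theorem \ref{thm:gen_kollar} applies to $h$ and gives $\deg E_0(V(f))=\deg E_0(V(g))$, i.e. $m(V(f),0)=m(V(g),0)$. Thus for $n>4$ the statement is literally the content of the preceding Corollary, and nothing beyond assembling the hypotheses is required.

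For the low-dimensional range $n\le 3$ I would not use $h$ at all. Here $\dim V(f)=n-1\le 2$, and the restriction $\varphi|_{V(f)}\colon(V(f),0)\to(V(g),0)$ is already a bi-Lipschitz homeomorphism of germs; the equality of multiplicities for bi-Lipschitz homeomorphic germs of dimension at most two is known, the curve case being classical (\cite{P-T}, \cite{F}, \cite{N-P}) and the surface case being \cite{BobadillaFS:2018}, both recalled in the Introduction.

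The main obstacle is the single value $n=4$, i.e. $\dim V(f)=3$. There $E_0(V(f))$ and $E_0(V(g))$ are projective surfaces in $\C\Bbb P^{3}$, of dimension exactly two — the one dimension that the hypothesis $n>2$ of Theorem \ref{thm:gen_kollar} excludes — while the germ-level multiplicity conjecture is open in dimension three, so neither of the two previous strategies applies. Closing this case would require a topological invariance of the degree for possibly singular surfaces in $\C\Bbb P^{3}$, a two-dimensional analogue of Theorem \ref{thm:gen_kollar}, applied to the homeomorphism $h$. I expect this to be the genuine difficulty: for \emph{smooth} surfaces it already follows from Koll\'ar's theorem \cite{kol2} (dimension $2>1$), and in the singular case only degree one is known to be a topological invariant \cite{BarthelD:1994}. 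It is therefore exactly the singularities of the tangent cone that the $n>4$ argument cannot handle, and this is where I would concentrate the effort.
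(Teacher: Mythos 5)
Your argument for $n>4$ is precisely the paper's own proof: the paper offers nothing beyond ``Thus, we obtain the following'', i.e.\ it deduces the corollary by feeding the homeomorphism $h$ supplied by Question B$_{Lip}$ into the preceding corollary (the one stated with the hypothesis $n>4$), and your handling of $n\le 3$ via the classical curve results and \cite{BobadillaFS:2018} is correct (the paper is silent about those cases, so your reading that its one-line deduction ignores $n\le 4$ is fair). The problem is your treatment of $n=4$: you leave it unproved and, worse, you assert that closing it would require topological invariance of the degree for possibly singular surfaces in $\C\Bbb P^{3}$. That assertion is wrong, and the reason is that you only ever apply Question B$_{Lip}$ to $f$ and $g$ in their original ambient dimension. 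A positive answer to Question B$_{Lip}$ is a hypothesis for \emph{all} $n$ simultaneously, and this allows one to stabilize the $n=4$ case into the range covered by Theorem \ref{thm:gen_kollar}.

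Concretely, given $f,g\colon(\C^4,0)\to(\C,0)$ and a bi-Lipschitz homeomorphism $\varphi\colon(\C^4,V(f),0)\to(\C^4,V(g),0)$, put $\tilde f=f\circ\pi$ and $\tilde g=g\circ\pi$, where $\pi\colon\C^5\to\C^4$ is the linear projection. Then $V(\tilde f)=V(f)\times\C$ and $V(\tilde g)=V(g)\times\C$, the map $\varphi\times\mathrm{id}_{\C}$ is a bi-Lipschitz homeomorphism $(\C^5,V(\tilde f),0)\to(\C^5,V(\tilde g),0)$, and $m(V(\tilde f),0)=m(V(f),0)$, $m(V(\tilde g),0)=m(V(g),0)$, since multiplicity is unchanged by passing to a cylinder. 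The assumed positive answer to Question B$_{Lip}$, applied in ambient dimension $5$, yields a homeomorphism $E_0(V(\tilde f))\to E_0(V(\tilde g))$; these sets are the projective cones over $E_0(V(f))$ and $E_0(V(g))$, hence hypersurfaces of dimension $3>2$ in $\C\Bbb P^{4}$, which is exactly why the dimension obstruction you identified disappears. Now the preceding corollary applies with ambient dimension $5>4$ (i.e.\ the reduction of \cite{FernandesS:2016} followed by Theorem \ref{thm:gen_kollar}) and gives $m(V(\tilde f),0)=m(V(\tilde g),0)$, hence $m(V(f),0)=m(V(g),0)$. The same stabilization disposes of every $n\le 4$ at once, so no two-dimensional analogue of Theorem \ref{thm:gen_kollar} is needed. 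In short, the case you flag as a genuine obstruction is closable with the tools already in the paper, and your proposal as written is incomplete at exactly that point.
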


\begin{proof}
Let $V,V'$ be bi-Lipschitz equivalent germs of analytic sets. We can assume that $V,V'$ are bi-Lipschitz equivalent cones. If $n>4$ then the result follows from \ref{wniosek}. In the general case, let us consider bi-Lipschitz equivalent cones $W=V\times \C^4\subset\C^{n+4}$
and $W'=V'\times\C^4\subset\C^{n+4}.$ Then $\deg W=\deg W'$, hence $\deg V=\deg V'$.
\end{proof}

\section{Real cones}
In this section, we consider  real algebraic varieties. We prove the following:

\begin{theorem}\label{thm:char_P1xP2}
Let  $\iota : \mathbb{P}_{\R}^1\times \mathbb{P}_{\R}^2\to \mathbb{P}_{\R}^n$ be an algebraic embedding. Let $X=\iota( \mathbb{P}_{\R}^1\times \mathbb{P}_{\R}^2).$ If $\deg X$ is odd, then the link of the cone $C_{\R}(X)$ is diffeomorphic to the twisted product $\tilde{\mathbb{S}^1\times \mathbb{S}^2}$. If $\deg X$ is even, and the link of $C_{\R}(X)$ is connected, then it is diffeomorphic either to $\mathbb{P}_{\R}^1\times\mathbb{P}_{\R}^2$ or to $\mathbb{P}_{\R}^1\times \mathbb{S}^2$, and both cases are possible. 
\end{theorem}

\begin{proof}
Denote by $A_k\subset \mathbb{P}_{\R}^k$ and $B_l\subset \mathbb{P}_{\R}^{\genfrac(){0pt}{2}{l+2}{2}-1}$ the Veronese embedding of $\mathbb{P}_{\R}^1$ and $\mathbb{P}_{\R}^2$ of degree $k$ and $l$, respectively  (see \cite[Example 8.4.3 b)]{Fulton} or \cite[Example 18.13]{Harris:1992}).
Now let  $\phi: \mathbb{P}_{\R}^k\times \mathbb{P}_{\R}^{\genfrac(){0pt}{2}{l+2}{2}-1}  \to \mathbb{P}_{\R}^{N(k,l)}$ be the Segre embedding  (see \cite[Example 8.4.3 a)]{Fulton} or \cite[Example 18.15]{Harris:1992}), and denote by $W_{k,l}$ the image $\phi(A_k \times B_l)$. As in the previous section, we have that deg $W_{k,l} = 3kl.$
Indeed, note that $A_k$ is $k\mathbb{P}_{\R}^1$ and $B_l$ is $l\mathbb{P}_{\R}^2$ as cycles. Hence $A_k\times B_l\sim (k \mathbb{P}_{\R}^1)\times (l\mathbb{P}_{\R}^{2})\sim kl \mathbb{P}_{\R}^1\times \mathbb{P}_{\R}^{2}$, where $\Gamma \sim \Lambda$ means again that $\Gamma$ and $ \Lambda$ are homologous cycles.  Since the Segre embedding of $\mathbb{P}_{\R}^1\times \mathbb{P}_{\R}^{2}$ has degree $3$ (see \cite[Example 8.4.3 a)]{Fulton}), we have that $\deg\phi(\mathbb{P}_{\R}^1\times \mathbb{P}_{\R}^{2})=3$. 
Then
$W_{k,l}=\phi(A_k\times B_l)\sim kl \phi(\mathbb{P}_{\R}^1\times \mathbb{P}_{\R}^{2})$.
Therefore, deg $W_{k,l} = 3kl.$

Let 
$X_{k,l}=C_{\R}(W_{k,l})$ be the cone with base $W_{k,l}.$ Additionally, denote by $L_{k,l}$ the link of this cone.

By the constructions, every base $W_{k,1}$ is the union of planes 
$$W_{k,1}=\bigcup_{a\in A_k } \phi(\{a\}  \times \mathbb{P}_{\R}^2).$$ 
This means that $\overline{X_{k,1}}$ is the union of $3$-planes which have the plane
 $\phi(\{a\}  \times \mathbb{P}_{\R}^2)$ at infinity and go through the point $O=(0,...,0).$ 
 Similarly  $\overline{X_{1,l}}$ is the union of planes which have the line
$\phi_1(\mathbb{P}_{\R}^1\times \{a\})$ at infinity and go through the point $O=(0,...,0).$

Thus the link  of $X_{k,1}$ is a union of spheres and  the link of $X_{1,l}$ is a union of  circles. In fact, it is easy to observe that the former link is a  sphere   bundle over $\mathbb{P}_{\R}^1$ whose projection is a composition of the projection 
$ \mathbb{P}_{\R}^8\setminus \{O\}\to \mathbb{P}_{\R}^{7}$  and the projection $: \mathbb{P}_{\R}^1\times \mathbb{P}_{\R}^2\to \mathbb{P}_{\R}^1$ . Similarly, the link of $X_{1,k}$ is a circle bundle over $\mathbb{P}_{\R}^2$  whose projection is a composition of the projection 
$  \mathbb{P}_{\R}^8\setminus \{O\}\to \mathbb{P}_{\R}^{7}$  and the projection  $ \mathbb{P}_{\R}^1\times \mathbb{P}_{\R}^2\to \mathbb{P}_{\R}^2$. In particular, both links are connected. Note that the  link $L_{1,1}$ has the structure of a circle bundle over $\mathbb{P}_{\R}^2$ and the structure of a sphere bundle 
over $\mathbb{P}_{\R}^1.$  We have:

\begin{lemma}\label{lemma}
If the link over a cone with base $\mathbb{P}_{\R}^1\times \mathbb{P}_{\R}^2$ is connected, then it is diffeomorphic either to $\mathbb{P}_{\R}^1\times  \mathbb{P}_{\R}^2$,  or to $\mathbb{P}_{\R}^1\times \mathbb{S}^2$, or to the twisted product $\tilde{\mathbb{S}^1\times \mathbb{S}^2}=\mathbb{S}^1\times \mathbb{S}^2/G$, where $G$ is the group generated by the involution $g: \mathbb{S}^1\times \mathbb{S}^2\ni (x,p)\mapsto (-x,-p)\in \mathbb{S}^1\times \mathbb{S}^2.$  
\end{lemma}

\begin{proof}
The link over a cone with base $\mathbb{P}_{\R}^1\times \mathbb{P}_{\R}^2$ is the total space of the Hopf fibration over the
base  (see definition \ref{hopf}), hence it is diffeomorphic to the double covering of $\mathbb{P}_{\R}^1\times \mathbb{P}_{\R}^2.$ We show that such a space $M$ is diffeomorphic to $\mathbb{P}_{\R}^1\times \mathbb{P}_{\R}^2$ or $\mathbb{P}_{\R}^1\times \mathbb{S}^2,$  or to the twisted product $\tilde{\mathbb{S}^1\times \mathbb{S}^2}.$ Indeed, let $h : M \to \mathbb{P}_{\R}^1\times \mathbb{P}_{\R}^2$ be the double covering. Hence $h_*(\pi_1(M))$ has index two in $\pi_1(\mathbb{P}_{\R}^1\times \mathbb{P}_{\R}^2)=\mathbb{Z}\times \mathbb{Z}/2.$ Hence either  $h_*(\pi_1(M))=\mathbb{Z}\times \{0\}$, or $h_*(\pi_1(M))=(2)\times \mathbb{Z}/2$, or  $h_*(\pi_1(M))$ is the subgroup $H$ generated by $(1,1).$ The first case corresponds to 
$\mathbb{P}_{\R}^1\times S^2$.  The second case corresponds to $\mathbb{P}_{\R}^1\times \mathbb{P}_{\R}^2.$  The third case corresponds to the twisted product $\tilde{\mathbb{S}^1\times \mathbb{S}^2}$.
\end{proof}

Since  the  link $L_{k,1}$ is a sphere bundle over $\mathbb{P}_{\R}^1,$ 
we have an exact sequence $$ 0=\pi_1(\mathbb{S}^2)\to \pi_1(L_{k,1}) \to \pi_1(\mathbb{P}_{\R}^1)\to 0, $$ 
hence $\pi_1(L_{k,1})=\pi_1(\mathbb{P}_{\R}^1)=\mathbb{Z}.$

Thus the  link $L_{1,1}$ has to be diffeomorphic either to the twisted product $\tilde{\mathbb{S}^1\times \mathbb{S}^2}$ or to $\mathbb{P}_{\R}^1\times \mathbb{S}^2.$ 
Using the theory of Seifert manifolds, we exclude the second possibility. Indeed, the following lemma is true (see \cite[Lemma 2.3.10]{seifert}):

\begin{lemma}\label{lemma1}
If $M$ is an orientable, Seifert fibered space with orbit surface $\mathbb{P}_{\R}^2$ and less than two exceptional fibers, then $M$ is homeomorphic either to a lens space $L(4n, 2n-1)$, or to a Seifert space with orbit space $\mathbb{S}^2$ and three 
exceptional fibers with two of them of index two, or to a connected sum of two copies of $\mathbb{P}_{\R}^3.$ All the relevant fundamental groups are finite except $\pi_1(\mathbb{P}_{\R}^3\#\mathbb{P}_{\R}^3)=\mathbb{Z}/2*\mathbb{Z}/2$.
\end{lemma}

In particular, we see that the space $\mathbb{P}_{\R}^1\times \mathbb{S}^2$ with fundamental group $\mathbb{Z}$ cannot be the total space of a circle bundle over $\mathbb{P}_{\R}^2.$ Thus $L_{1,1}$  is  diffeomorphic to $\tilde{S^1\times \mathbb{S}^2}$. 

Now consider the link $L_{1,2}.$ Since it is a circle bundle over $\mathbb{P}_{\R}^2$ it can be diffeomorphic either to $\mathbb{P}_{\R}^1\times \mathbb{P}_{\R}^2$ or to the twisted product $\tilde{\mathbb{S}^1\times \mathbb{S}^2}.$ If the second possibility holds then we can lift a natural analytic isomorphism 
$\rho: W_{1,1}\to W_{1,2}$ to an analytic isomorphism $\rho': L_{1,1}\to L_{1,2}$ that preserves the Hopf fibration. Indeed, we have the following diagram:

\begin{center}
\begin{picture}(240,160)(-40,40)
\put(0,180){\makebox(0,0)[tl]{$L_{1,1}$}}
\put(178,180){\makebox(0,0)[tl]{$L_{1,2}$}}
\put(0,40){\makebox(0,0)[tl]{$W_{1,1}$}}
\put(170,40){\makebox(0,0)[tl]{$W_{1,2}$}}
\put(80,50){\makebox(0,0)[tl]{$\rho$}}
\put(85,190){\makebox(0,0)[tl]{$\rho' $}}
\put(15,110){\makebox(0,0)[tl]{$\pi$}}
\put(185,110){\makebox(0,0)[tl]{$\pi'$}}
\multiput(25,175)(8,0){17}{\line(1,0){5}}
\put(157,175){\vector(1,0){10}} \put(35,35){\vector(1,0){130}}
\multiput(180,165)(0,-8){14}{\line(0,-1){5}}
\put(180,53){\vector(0,-1){10}}
\multiput(10,165)(0,-8){14}{\line(0,-1){5}}
\put(10,53){\vector(0,-1){10}}
\end{picture}
\end{center}

\vspace{5mm}

\noindent where $\pi,\pi'$ are suitable Hopf fibrations, i.e., they are double coverings.
Since $(\rho\circ \pi)_*(\pi_1(L_{1,1})=\pi'_*(\pi_1(L_{1,2}))=H$, (recall that $H\subset\Bbb Z\times\Bbb Z/(2)$ is a subgroup generated by $(1,1)$),   by \cite[Theorem 6.1]{greenberg} we can lift a mapping $\rho$ to an analytic isomorphism $\rho'.$ By the construction, this lift is an $a$-invariant, i.e., $\rho'(-x)=-\rho'(x).$

By Proposition \ref{alex}, this means  that there is an $a$-invariant subanalytic bi-Lipschitz mapping from $X_{1,1}$ to $X_{1,2}.$
But this mapping has an $a$-invariant graph, and by Corollary \ref{bilipszic}, we have $\deg X_{1,1}=\deg X_{1,2} \mod 2$, a contradiction. Hence $L_{1,2}=\mathbb{P}_{\R}^1\times \mathbb{P}_{\R}^2.$

Let us note that the proof above can be stated in the following more general form
(note that diffeomorphic real algebraic manifolds are Nash isomorphic, see \cite[Corollary 11]{kol}):

\begin{lemma}\label{lemat}
Let $X\subset \mathbb{P}_{\R}^n, Y \subset \mathbb{P}_{\R}^m$ be two smooth projective varieties that are diffeomorphic.
Assume that cones $C_{\R}(X), C_{\R}(Y)$ have connected, homeomorphic links $L,L'.$ Let $G=\pi_1(X), H=\pi_1(L).$ If there is only one  subgroup of $G$ of index two, which is isomorphic to $H$ and induces the same covering as $H$ (i.e., this covering is homeomorphic to $L$), then
$\deg C_{\R}(X)=\deg C_{\R}(Y) \mod 2.$ In particular, this is true if $G$ has only one subgroup of index two.
\end{lemma}

Now consider the link $L_{2,1}.$ Its fundamental group is $\mathbb{Z}$, hence it is diffeomorphic either to the twisted product $\tilde{\mathbb{S}^1\times \mathbb{S}^2}$ or to $\mathbb{P}_{\R}^1\times \mathbb{S}^2.$ By the same argument as above the first possibility is excluded. Hence 
$L_{2,1}=\mathbb{P}_{\R}^1\times \mathbb{S}^2.$ 

\vspace{3mm}

To finish our proof, we need the following:

\begin{lemma}\label{lemma2}
Let $C_{\R}(X)\subset \R^n$ be an algebraic cone of dimension $d>1$ with connected base $X.$ If $\deg C_{\R}(X)$ is odd, then the link of $C_{\R}(X)$ is connected.
\end{lemma}

\begin{proof}
We can assume $d<n.$  Assume that the link $L=A\cup B$ has two connected components. Then $A,B$ are Euler cycles, in particular, they are homological cycles$\mod 2.$ Let $\phi: \mathbb{S}^{n-1} \ni x\mapsto [x]\in \mathbb{P}_{\R}^{n-1}.$ The mapping $\phi$ is continuous and restricted to $A$ is a homeomorphism onto $X.$  Hence $\phi_*([A])=[X].$ Since the cycle $A$ is zero in $H_{d-1}(\mathbb{S}^{n-1},\mathbb{Z}/(2))$, we have $[X]=\phi_*([A])=0$ in $H_{d-1} (\mathbb{P}_{\R}^{n-1},\mathbb{Z}/(2)).$ But $\deg C_{\R}(X)=\deg X \mod 2$- a contradiction.
\end{proof} 

 Let $X$ be as in Theorem \ref{thm:char_P1xP2} and assume deg $C_{\R}(X)$ is odd. If the link $L$ of $C_{\R}(X)$ is not diffeomorphic  to the twisted product $\tilde{\mathbb{S}^1\times \mathbb{S}^2}$, then either $L\cong L_{1,2}$ or $L\cong L_{2,1}$. Since the degrees of the cones $X_{1,2}$ and $X_{2,1}$ are even, by Lemma \ref{lemat} we get a contradiction. In the same way, we can prove that if deg $C_{\R}(X)$ is even, then the link $L$ cannot be diffeomorphic to $L_{1,1}\cong\tilde{\mathbb{S}^1\times \mathbb{S}^2}.$
\end{proof}

\begin{theorem}\label{main_thm}
There exist three semi-algebraically and bi-Lipschitz equivalent algebraic cones $C_{\R}(X), C_{\R}(Y), C_{\R}(B)\subset \R^8$ with non-homeomorphic smooth algebraic bases. In fact, $X\cong \mathbb{P}_{\R}^1\times \mathbb{P}_{\R}^2$,  $Y\cong \mathbb{P}_{\R}^1\times \mathbb{S}^2$, and
$B\cong\tilde{\mathbb{S}^1\times \mathbb{S}^2}.$ In particular, the real version of General Metric Question B has a negative answer.

\end{theorem}

\begin{proof}
Consider the standard embedding $\iota:\mathbb{S}^2 \to \mathbb{P}_{\R}^3$ and let $A=\iota(\mathbb{S}^2).$  Let
$Y:=\phi_1(\mathbb{P}_{\R}^1\times A)$, where $\phi_1: \mathbb{P}_{\R}^1\times \mathbb{P}_{\R}^3\to \mathbb{P}_{\R}^7$ is the Segre embedding. Note that the link of $C_{\R}(Y)$ is connected as a circle bundle over the sphere (we use here the same trick
as in the proof of Theorem \ref{thm:char_P1xP2}).
Since the fundamental group of $Y$ is cyclic, we see that the link of $C_{\R}(Y)$ is $\mathbb{P}_{\R}^1\times \mathbb{S}^2.$ 

Now consider the embedding  $\iota: \tilde{\mathbb{S}^1\times \mathbb{S}^2}\ni [x,y]\mapsto [x:y]\in \mathbb{P}_{\R}^4.$ Let $B=\iota(\tilde{S^1\times \mathbb{S}^2}).$ Hence, $B$ is a hyperquadric in $\mathbb{P}_{\R}^4$ given by the equation $x_1^2+x_2^2=y_1^2+y_2^2+y_3^2.$ Since $C_{\R}(B)$ is a hypersurface its link is a smooth hypersurface on the sphere. Hence, it has an orientable link, see \cite{hs}. However, the space $\tilde{\mathbb{S}^1\times \mathbb{S}^2}$ is non-orientable, this implies that the link of $C_{\R}(B)$ has to be connected. 
Since the group $\pi_1(B)$ is cyclic, the space $B$ has only one connected covering of degree two. Hence, we see that the link is diffeomorphic to $\mathbb{S}^1\times \mathbb{S}^2$. 

Another way to see this (suggested by the referee) is the following:
 the link is a subset of $\mathbb{R}^5$ given by equations $ x_1^2+ x_2^2=y_1^2+y_2^2+y_3^2=\frac{1}{2}$, so it is diffeomorphic to $\mathbb{S}^1\times \mathbb{S}^2.$

Thus, $C_{\R}(B), C_{\R}(X)$ and $C_{\R}(Y)$ have diffeomorphic links.
By \cite[Corollary 11]{kol}, all these links are Nash diffeomorphic. In particular, they are bi-Lipschitz and semi-algebraically equivalent. We conclude the proof  by Proposition \ref{alex}.
\end{proof}

\begin{corollary}
There exist four-dimensional algebraic cones $C_{\R}(X),C_{\R}(Y)\subset \R^8$ such that the following holds:

(a) there exists a semi-algebraic bi-Lipschitz homeomorphism $\phi: C_{\R}(X)\to C_{\R}(Y)$ that transforms every ray $Ox$ into the ray $O\phi(x)$ isometrically, 

(b)  there is no  homeomorphism $C_{\R}(X)\to C_{\R}(Y)$ that transforms
every line passing through the origin onto a line passing through the origin.
\end{corollary}

\begin{proof}
Let $C_{\R}(X), C_{\R}(Y)$ be as in Theorem \ref{main_thm}.  By Proposition \ref{alex} (or rather its proof), there exists a semi-algebraic bi-Lipschitz homeomorphism $\phi: C_{\R}(X)\to C_{\R}(Y)$ that transforms every ray $Ox$ into the ray $O\phi(x)$ isometrically. 

Otherwise, if there were a homeomorphism $C_{\R}(X)\to C_{\R}(Y)$ that transforms
every line passing through the origin onto a line passing through the origin, then it would induce a homeomorphism $X\to Y$, a contradiction.
\end{proof}

\begin{theorem}\label{ost}
(1) The manifolds  $\mathbb{S}^1\times \mathbb{S}^2$ and $\tilde{\mathbb{S}^1\times \mathbb{S}^2}$ are not diffeomorphic to projective  varieties of odd degree. 

(2)  Let $\iota: \mathbb{P}_{\R}^n\to \mathbb{P}_{\R}^N$ be an algebraic embedding. If 
$\deg\iota(\mathbb{P}_{\R}^n)$ is odd, then the link of $C_{\R}(\iota(\mathbb{P}_{\R}^n))$ is $\mathbb{S}^n$, while if $\deg \iota(\mathbb{P}_{\R}^n)$  is even, then the link of $C_{\R}(\iota(\mathbb{P}_{\R}^n))$ is disconnected, i.e., it is homeomorphic to  $\mathbb{P}_{\R}^n
\sqcup \mathbb{P}_{\R}^n$.

(3) A  simply connected real projective variety of positive dimension cannot have odd degree.
\end{theorem}

\begin{proof}
(1) Note that $\tilde{\mathbb{S}^1\times \mathbb{S}^2}$ has an embedding into $\mathbb{P}_{\R}^4$ as a hyperquadric $B$, and the link $L$ of the cone $C_{\R}(B)$ is connected (see the proof of Theorem \ref{main_thm}). 

Now assume that  $\tilde{\mathbb{S}^1\times \mathbb{S}^2}$ has an algebraic embedding $B'$  into some $\mathbb{P}_{\R}^N$  with odd degree. Then its link $L'$ is connected by Lemma \ref{lemma2}, and since the fundamental group of $\tilde{\mathbb{S}^1\times \mathbb{S}^2}$ is cyclic, it has only one connected covering of degree two. Consequently, $L'$ is diffeomorphic to $\mathbb{S}^1\times \mathbb{S}^2.$ Thus, $L$ is diffeomorphic to 
$L'.$ By assumption, the manifolds $B$ and $B'$ are diffeomorphic. Thus, by \cite[Corollary 11]{kol}, they are Nash diffeomorphic, and in particular analytically equivalent. Since the fundamental group of $\tilde{\mathbb{S}^1\times \mathbb{S}^2}$  has only one subgroup of index two, we have by Lemma \ref{lemat} $\deg C_{\R}(B)=\deg C_{\R}(B') \mod 2$, a contradiction.

In a similar way, we can prove that $\mathbb{S}^1\times \mathbb{S}^2$ has no embedding into a projective space with odd degree (note that the manifold $\mathbb{S}^1\times \mathbb{S}^2=\mathbb{P}_{\R}^1\times \mathbb{S}^2$ has a projective embedding of even degree with a connected link of the cone).

(2) Note that $\mathbb{P}_{\R}^n$ has a trivial  embedding into a projective space with  degree one. Let $X$ be another algebraic variety equivalent to  $\mathbb{P}_{\R}^n$ with a connected link. Since $\pi_1(\mathbb{P}_{\R}^n)$ is cyclic by Lemma \ref{lemat} we have 
$1 = \deg  C_{\R}(\mathbb{P}_{\R}^n) = \deg C_{\R}(X) \mod 2.$

(3) Let $X\subset \mathbb{P}^n_{\R}$ be a simply connected algebraic set. If it has an odd degree then by Lemma \ref{lemma2} the cone $C_{\R}(X)$  has connected link $L.$ This means that the Hopf fibration $L\to X$ is a non-trivial covering of degree two. Since $X$ is simply connected, this is a contradiction.  
\end{proof}

\begin{corollary}\label{conj}
For algebraic cones  over $\mathbb{P}_{\R}^1\times\mathbb{P}_{\R}^2$ and algebraic cones over
$\mathbb{P}_{\R}^k$, the Conjecture A$_{\R}$(Lip) has a positive answer.
\end{corollary}

\begin{proof}
It follows from Theorem \ref{thm:char_P1xP2} and Theorem \ref{ost}(2). Indeed, if cones are bi-Lipschitz equivalent, then their links are at least homotopy equivalent. Since it is easy to see that all possible different links, i.e, $\mathbb{P}_{\R}^1\times \mathbb{P}_{\R}^2, 
\mathbb{P}_{\R}^1\times \mathbb{S}^2,\tilde{\mathbb{S}^1\times \mathbb{S}^2}$ and $\mathbb{S}^k, \mathbb{P}_{\R}^k
\sqcup \mathbb{P}_{\R}^k$ are pairwise not homotopy equivalent, the result follows.    \end{proof}

\end{document}